\documentclass{article}
\usepackage[utf8]{inputenc}
\usepackage{amsmath, amsthm, amssymb,xcolor}
\usepackage{enumerate, comment}
\usepackage{hyperref}
\usepackage{todonotes}
\usepackage{tikz}

\title{Independent domination versus packing in subcubic graphs}
\author{Eun-Kyung Cho~\thanks{Corresponding author.~\texttt{ekcho2020@gmail.com}~Department of Mathematics, Hankuk University of Foreign Studies, Yongin-si, Gyeonggi-do, Republic of Korea.~Eun-Kyung Cho was supported by Basic Science Research Program through the National Research Foundation of Korea (NRF) funded by the Ministry of Education (No. RS-2023-00244543).} \and Minki Kim~\thanks{\texttt{minkikim@gist.ac.kr}~Division of Liberal Arts and Sciences, GIST, Gwangju, Republic of Korea.~Minki Kim was supported by Basic Science Research Program through the National Research Foundation of Korea (NRF) funded by the Ministry of Education (NRF-2022R1F1A1063424) and by GIST Research Project grant funded by
the GIST in 2023.}}

\date{\today}

\newtheorem{theorem}{Theorem}[section]
\newtheorem{lemma}[theorem]{Lemma}
\newtheorem{corollary}[theorem]{Corollary}
\newtheorem*{claim}{Claim}

\newtheorem{conjecture}[theorem]{Conjecture}
\newtheorem{question}[theorem]{Question}
\newtheorem{observation}[theorem]{Observation}

\def \eun#1 {\textcolor{brown}{Eun: #1}}

\begin{document}
\maketitle

\begin{abstract}
    In 2011, Henning, L\"{o}wenstein, and Rautenbach observed that the domination number of a graph is bounded from above by the product of the packing number and the maximum degree of the graph.
    We prove a stronger statement in subcubic graphs: the independent domination number is bounded from above by three times the packing number.
\end{abstract}

\section{Introduction}
Throughout this paper, we consider only finite graphs.

We say a graph is {\em dominated} by a vertex subset $A$ if every vertex of the graph is either in $A$ or is adjacent to a vertex in $A$. In this situation, we say $A$ is a {\em dominating set} of $G$. The {\em domination number} of $G$ is the size of a minimum dominating set of $G$, and is denoted by $\gamma(G)$. Understanding the domination number of graphs is one of the most fundamental topics in graph theory.

An {\em independent dominating set} of $G$ is a dominating set of $G$ whose vertices are not adjacent to each other, that is, it is an independent set that dominates $G$. Analogously, the {\em independent domination number} of $G$, denoted by $i(G)$, is defined as the size of a minimum independent dominating set of $G$. Independent dominating sets can be understood as maximal independent sets, and the independent domination number is the size of minimum maximal independent set. The independent domination number of graphs has been extensively studied since 1960s~\cite{berge1962theory, ore1962theory}. See \cite{goddard2013independent} for an overview of independent domination numbers of graphs. 
Obviously, $i(G) \geq \gamma(G)$ for every graph $G$, and there has been a number of results comparing $i(G)$ to $\gamma(G)$. See, for example, \cite{cho2023tight,
cho2023independent}.

Meanwhile, the domination number of a graph can be viewed as an analogue of a covering number, the number of spherical balls needed to cover a given space.
To understand this, regard a closed neighborhood of a vertex as a graph analogue of unit ball in the plane.
Covering number has often been studied in comparison with spherical packing number, the number of spherical balls that we can put in a given space without any overlap.
In a similar point of view, we can consider a graph analogue of the packing number.
A {\em packing} (sometimes called a $2$-packing) of a graph $G$ is a vertex set whose closed neighborhoods are pairwise disjoint, or equivalently, whose pairwise distance in $G$ is at least $3$.
A {\em maximal packing} of $G$ is a packing in $G$ that is not properly contained in the larger packing of $G$.
The {\em packing number} of $G$ is the maximum size of a packing in $G$, and is denoted by $\rho(G)$.
A study of a packing related to a dominating set in graphs has a long history, which dates back to 1970s~\cite{harary1970covering, meir1975relations}. See also~\cite{haynes1998fundametals, henning2011dominating, lowenstein2013chiraptophobic, topp1991packing} for an overview of research in this direction.

Our focus is to reveal a relation between the independent domination number and the packing number in graphs.
As the first step toward this direction, we prove the following:
\begin{theorem}\label{thm:cubic-i-rho2}
Let $G$ be a graph where every vertex has degree at most $3$.
Then for every maximal packing $S$ of a graph $G$, there is an independent dominating set of size at most $3|S|$.
\end{theorem}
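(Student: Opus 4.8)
The plan is to use the two consequences of maximality of $S$ and then to upgrade a natural dominating set to an independent one without exceeding the bound. First I would record what maximality forces. Since the elements of $S$ are pairwise at distance at least $3$, the closed neighborhoods $N[s]$ for $s \in S$ are pairwise disjoint. Since no vertex can be added to $S$, every vertex of $G$ lies within distance $2$ of some $s \in S$, so the radius-two balls $N_2[s] = \{v : d(v,s) \le 2\}$ cover $V(G)$. Moreover, for each $s$ the neighborhood $N(s)$ dominates all of $N_2[s]$: every vertex at distance exactly $2$ from $s$ is adjacent to some vertex of $N(s)$, and $s$ is adjacent to $N(s)$. As $|N(s)| \le 3$ in a subcubic graph, $\bigcup_{s \in S} N(s)$ is a dominating set of size at most $3|S|$, which reproves the bound of Henning, L\"{o}wenstein, and Rautenbach for $\gamma$. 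The whole difficulty is that this set is typically not independent, and independent domination cannot in general be bounded by domination, so I cannot simply pass to an independent dominating subset.

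Instead I would build the independent dominating set ball by ball, charging at most three chosen vertices to each $s$. Because the balls $N_2[s]$ cover $V(G)$, any set that dominates each $N_2[s]$ is a dominating set of $G$; if in addition it is independent and uses at most three vertices per $s$, it is an independent dominating set of size at most $3|S|$, as required. The first subtask is the isolated local problem: show that $N_2[s]$ can always be dominated by an independent set of at most three vertices. The default choice is an independent subset of $N(s)$; the only obstruction is triangles among $s, a_1, a_2, a_3$, which force dropping a neighbor $a_i$. But a neighbor lying in a triangle with $s$ has at most one further neighbor (degree at most $3$), so the few distance-two vertices it alone would have dominated can be covered directly by choosing them, or a suitable neighbor of theirs, instead. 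A short case analysis on the number of neighbors of $s$, the triangles among them, and how distance-two vertices attach should confirm that three independent vertices always suffice locally.

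The crux is to stitch these local choices into one globally independent set. Two balls $N_2[s]$ and $N_2[s']$ interact only when $d(s,s') \le 4$, and a conflict is an edge between a vertex chosen for $s$ and a vertex chosen for $s'$; such edges run between distance-one or distance-two vertices of neighboring balls. I would fix a linear order on $S$ and process the balls in turn, at each step selecting the (at most three) vertices for $s$ so as to avoid adjacency to all previously selected vertices while still dominating the still-undominated part of $N_2[s]$. The main obstacle, and where I expect the real work to lie, is proving that this is always possible within the budget of three: the subcubic bound limits how many forbidden neighbors a neighboring ball can impose, and earlier selections only help with domination, so there should be enough freedom, but establishing it appears to require a careful case analysis of the local configurations at the boundary between adjacent balls. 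If the greedy ordering turns out to create an irreparable conflict in some configuration, the fallback is to replace the greedy step by a global discharging argument that redistributes charge across adjacent balls, again using that each vertex has at most three neighbors.
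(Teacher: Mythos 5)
Your proposal gets the easy half right: maximality of $S$ does give that $N:=\bigcup_{s\in S}N(s)$ dominates $G$ with $|N|\le 3|S|$, and you correctly identify that the entire difficulty is converting this into an \emph{independent} dominating set within the same budget. But the proposal stops exactly where the proof has to begin. The ``stitch the local choices together greedily, and if that fails fall back to discharging'' step is the whole theorem, and neither the greedy scheme nor the discharging is carried out or even shown to be plausible on the hard configurations. The obstruction is not merely pairwise conflicts between adjacent balls: since every vertex of $N$ has a neighbor in $S$, the induced subgraph $G[N]$ has maximum degree $2$, and the dangerous situation is a degree-$3$ vertex $s\in S$ each of whose three neighbors is matched by an edge of $G[N]$ to a neighbor of some \emph{other} packing vertex. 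Any maximal independent set in $G[N]$ must choose exactly one endpoint of each such edge, and if it chooses the far endpoint all three times, $s$ is undominated and has no ``spare'' neighbor to charge it to, so adding $s$ itself breaks the $3|S|$ count. Whether this can be avoided is a genuinely global, parity-flavored question about simultaneously orienting all these matched pairs; a vertex-by-vertex greedy over a linear order on $S$ can commit to choices that are individually fine but globally unrepairable, and you give no argument that it cannot.

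The paper resolves exactly this point with a structural device your sketch has no analogue of: it builds an auxiliary multigraph $Q$ on the problematic packing vertices (joining $u,v\in S$ when a matching edge of $G[N]$ runs between a neighbor of $u$ and a neighbor of $v$), shows by an exchange/minimality argument that the relevant components are cubic, and then invokes the fact that every connected multigraph of minimum degree $2$ admits an orientation with no sources; the orientation dictates which endpoint of each matched edge to keep so that every such $s$ retains a chosen neighbor. A careful counting (an injection of the final set into $N$ together with the low-degree undominated packing vertices) then gives $3|S|$. So while your decomposition into balls $N_2[s]$ and the local three-vertex analysis are reasonable preliminaries, the proposal is missing the key idea and does not constitute a proof.
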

Theorem~\ref{thm:cubic-i-rho2} was motivated by an observation by Henning, L\"{o}wenstein, and Rautenbach~\cite{henning2011dominating}
that says for every graph $G$ with $\delta(G) \ge 1$, the neighborhood $N_G(S)$ of every maximal packing $S$ of $G$ is a dominating set of $G$. This implies that $\gamma(G) \leq \Delta(G)\rho(G)$ when $G$ is a graph with maximum degree $\Delta(G)$, and they also characterized when the equality holds for $\Delta(G) = 3$: if $G$ is a connected graph with $\Delta(G) \le 3$, then $\gamma(G) =3 \rho(G)$ if and only if $G \in \{H_1, H_2, H_3\}$, where $H_i$'s are the graphs depicted in Figure~\ref{fig:tight}.
Note that $H_1$ and $H_2$ are the only cubic non-planar graphs of order $8$.
The graph $H_2$ is the {\it Wagner graph}, and $H_3$ is the {\it Petersen graph}.
\begin{figure}[htbp]
\centering
\begin{tikzpicture}
\begin{scope}[shift={(-2,0)}]
\filldraw[fill=black, draw=black] \foreach \x in {0,45,90,135,180,225,270,315}{ 
(\x:1) circle (0.05)};
\draw (0,0) circle (1cm);
\draw (0:1) -- (225:1);
\draw (180:1) -- (315:1);
\draw (90:1) -- (270:1);
\draw (45:1) -- (135:1);
\node at (0,-2) {$H_1$};
\end{scope}   

\begin{scope}[shift={(1,0)}]
\filldraw[fill=black, draw=black] \foreach \x in {0,45,90,135,180,225,270,315}{ 
(\x:1) circle (0.05)};
\draw (0,0) circle (1cm);
\draw (0:1) -- (180:1);
\draw (45:1) -- (225:1);
\draw (90:1) -- (270:1);
\draw (135:1) -- (315:1);
\node at (0,-2) {$H_2$};
\end{scope}   

\begin{scope}[shift={(4,0)}]
\filldraw[fill=black, draw=black] \foreach \x in {18,90,162,234,306}{ 
(\x:1) circle (0.05)};
\filldraw[fill=black, draw=black] \foreach \x in {18,90,162,234,306}{ 
(\x:0.6) circle (0.05)};
\draw (0,0) circle (1cm);
\draw \foreach \x in {18,90,162,234,306}{(\x:0.6) -- (\x:1)};
\draw (18:0.6) -- (162:0.6) -- (306:0.6) -- (90:0.6) -- (234:0.6) -- cycle;
\node at (0,-2) {$H_3$};
\end{scope}

\begin{scope}[shift={(6.2,0)}]
\filldraw[fill=black, draw=black] \foreach \x in {-0.8,0,0.8}{ 
(0,\x) circle (0.05)};
\filldraw[fill=black, draw=black] \foreach \x in {-0.8,0,0.8}{ 
(1.5,\x) circle (0.05)};
\draw \foreach \x in {-0.8,0,0.8}{(0,\x) -- (1.5,-0.8)};
\draw \foreach \x in {-0.8,0,0.8}{(0,\x) -- (1.5,0)};
\draw \foreach \x in {-0.8,0,0.8}{(0,\x) -- (1.5,0.8)};
\node at (0.8,-2) {$K_{3,3}$};
\end{scope}
\end{tikzpicture}
    \caption{Tight examples for Corollary~\ref{cor:cubic-i-rho}}
    \label{fig:tight}
\end{figure}
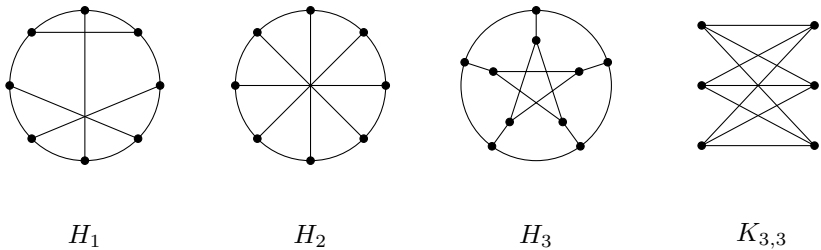

Here is an immediate corollary of the main theorem:
\begin{corollary}\label{cor:cubic-i-rho}
For every subcubic graph $G$, $i(G) \le 3 \rho(G)$. 
\end{corollary}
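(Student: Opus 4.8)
The Corollary is immediate from Theorem~\ref{thm:cubic-i-rho2}: a maximum packing $S$ satisfies $|S|=\rho(G)$ and is in particular a maximal packing, so the theorem yields an independent dominating set of size at most $3|S|=3\rho(G)$, whence $i(G)\le 3\rho(G)$. The substance therefore lies entirely in the theorem, and that is what I would actually prove; the plan below is for it.

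The plan is to start from the dominating set supplied by the Henning--L\"owenstein--Rautenbach observation and repair it into an \emph{independent} dominating set without exceeding the budget $3|S|$. First I would discard trivialities: isolated vertices must lie in $S$ and can be placed in the dominating set at unit cost, so assume $\delta(G)\ge 1$ and work one component at a time. Using maximality of $S$, every vertex lies within distance $2$ of $S$, so $V(G)$ partitions as $S\sqcup N(S)\sqcup T$, where $T$ is the set of vertices at distance exactly $2$ from $S$. Since the closed neighborhoods $N[s]$ are pairwise disjoint, $|N(S)|=\sum_{s\in S}\deg(s)\le 3|S|$ and $N(S)$ dominates $G$. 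Hence if $N(S)$ were already independent we would be done, and the whole content of the theorem is to remove the edges inside $N(S)$ while keeping domination and staying within $3|S|$ vertices.

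The structural fact I would lean on is that $G[N(S)]$ has maximum degree at most $2$: each $a\in N(S)$ is adjacent to exactly one $s\in S$ (two would force $\mathrm{dist}(s,s')\le 2$, contradicting the packing condition), leaving at most two further neighbors, all in $N(S)\cup T$. Thus $G[N(S)]$ is a disjoint union of paths and cycles, and the adjacencies to repair are of two kinds: \emph{internal} edges joining two neighbors of the same $s$, and \emph{bridging} edges $aa'$ with $a\in N(s)$, $a'\in N(s')$, $s\ne s'$, which force $\mathrm{dist}(s,s')=3$. The slack is $3|S|-|N(S)|=\sum_{s}(3-\deg s)$, positive exactly at packing vertices of degree below $3$, so the genuinely tight regime is the cubic one. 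The construction then walks along each path/cycle component of $G[N(S)]$, selects along it an independent subset of $N(S)$ that still dominates $S\cup N(S)$, and augments to dominate $T$; the count is tracked by charging each chosen vertex to a packing vertex so that each $s$ receives at most three charges. The delicate bookkeeping is that dropping a neighbor $a$ of $s$ to kill a conflict loses domination of the \emph{private} distance-$2$ neighbors of $a$, but $\deg(a)\le 3$ leaves at most two such vertices (fewer once $a$ lies on a conflict edge), which can be re-dominated by including them or an alternative neighbor, financed by the slack or by a compensating deletion elsewhere on the same component.

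I expect the main obstacle to be precisely the interaction of these local fixes in the cubic case, where no degree slack exists: a distance-$2$ vertex may be shared by the second neighborhoods of several packing vertices, and bridging edges glue stars into long chains, so a repair for one star can cascade into its neighbors. Controlling this requires a careful case analysis on the adjacency pattern within each star $\{s\}\cup N(s)$ and on how $T$ attaches to it, and it is here that I would expect $H_1,H_2,H_3$ and $K_{3,3}$ to surface as the configurations in which every local repair is forced and the count cannot beat $3|S|$. An alternative route is induction on $|S|$, made possible by the deliberately strong ``for every maximal packing'' formulation: remove the territory of a single $s$, apply the hypothesis to the smaller graph, and reattach at most three independent vertices around $s$. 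The subtle point to secure there is that after deletion the remaining packing is still \emph{maximal} in the smaller graph, so that induction applies with one fewer packing vertex rather than with a larger recompleted packing.
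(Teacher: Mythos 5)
Your derivation of the corollary is correct and is exactly the paper's: Theorem~\ref{thm:cubic-i-rho2} applied to a maximum packing $S$ (which is in particular a maximal packing) yields an independent dominating set of size at most $3|S| = 3\rho(G)$, hence $i(G) \le 3\rho(G)$. The lengthy plan you append concerns the proof of the theorem itself rather than the corollary; since the theorem is available as a prior result, nothing more is needed here, and I note only that your sketch of the theorem is heuristic and does not reach the paper's actual mechanism (the choice of $A$ minimizing $|X(A)|$, the auxiliary multigraph $Q$, and the source-free orientation lemma), so it would not serve as a substitute proof of Theorem~\ref{thm:cubic-i-rho2}.
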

Observe that the inequality in Corollary~\ref{cor:cubic-i-rho} is also tight by graphs $H_1$, $H_2$, $H_3$ in~\cite{henning2011dominating}. In addition, we observe that $i(G) = 3\rho(G)$ when $G$ is a complete bipartite graph $K_{3,3}$.

Our paper is organized as follows.
In Section~\ref{sec:preliminaries}, we indicate notation and provide an observation and a lemma that are used in the proof of Theorem~\ref{thm:cubic-i-rho2}.
The proof of Theorem~\ref{thm:cubic-i-rho2} is presented in Section~\ref{sec:proof}, and we list some remarks and questions in Section~\ref{sec:future}.

\section{Preliminaries}\label{sec:preliminaries}
For a positive integer $k$, let $[k]$ be the set of all positive integers that are at most $k$.
That is, $[k] = \{1, 2, \ldots, k\}$.
For sets $S$ and $T$ such that $S \cap T = \emptyset$, we use $S \sqcup T$ to denote the disjoint union of $S$ and $T$. 

Let $G$ be a simple and undirected graph. The vertex set and the edge set of $G$ are denoted by $V(G)$ and $E(G)$, respectively.

For $v \in V(G)$, we denote by $N_G(v)$ the {\em neighborhood} of $v$ in $G$ and by $N_G[v]$ the {\em closed neighborhood} of $v$ in $G$, that is, $N_G[v] = N_G(v) \cup \{v\}$.
The {\em degree} of $v$ in $G$ is denoted by $\deg_G(v)$, and the maximum degree and the minimum degree of $G$ are denoted by $\Delta(G)$ and $\delta(G)$, respectively, that is,
$\Delta(G) = \max\{\deg_G(v): v\in V(G)\}$ and $\delta(G) = \min\{\deg_G(v): v\in V(G)\}$.

For $S \subseteq V(G)$, let $N_G(S) = \bigcup_{v \in S}N_G(v)$, and $N_G[S] = \bigcup_{v \in S} N_G[v]$.
Also, we use $G[S]$ to denote the subgraph of $G$ induced by $S$.

The {\em length} of a path in $G$ is the number of edges in the path.
The {\em end vertex} of a path in $G$ is a vertex which is incident with exactly one edge in the path. So each path has exactly two end vertices.
For $u, v \in V(G)$, a {\em shortest path} from $u$ to $v$ in $G$ is a path having minimum length among all paths whose end vertices are $u$ and $v$. The length of such a path is called the {\em distance} between $u$ and $v$ in $G$, and is denoted by $d_G(u,v)$.

Here are simple facts that allow us to consider only connected graphs when proving Theorem~\ref{thm:cubic-i-rho2}.

\begin{observation}\label{obs:connected}
Let $G$ be a graph with two components $G_1$ and $G_2$.
\begin{itemize}
\item[(1)] A set $S$ is a maximal packing in $G$ if and only if $V(G_1) \cap S$ is a maximal packing in $G_1$, and $V(G_2) \cap S$ is a maximal packing in $G_2$.
\item[(2)] A set $I$ is an independent dominating set of $G$ if and only if $V(G_1) \cap I$ is an independent dominating set of $G_1$, and $V(G_2) \cap I$ is an independent dominating set of $G_2$.
\end{itemize}
\end{observation}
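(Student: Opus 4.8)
The plan is to exploit the \emph{locality} of both notions. Since $G_1$ and $G_2$ are distinct components, $G$ has no edge with one endpoint in $V(G_1)$ and the other in $V(G_2)$, and consequently $d_G(u,v)=\infty\ge 3$ whenever $u$ and $v$ lie in different components, while $d_G(u,v)=d_{G_i}(u,v)$ whenever $u,v\in V(G_i)$. I would record these two distance facts at the outset, since they do essentially all of the work. Throughout I write $S_i=V(G_i)\cap S$ and $I_i=V(G_i)\cap I$, so that $S=S_1\sqcup S_2$ and $I=I_1\sqcup I_2$.

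For part~(1), I would first verify the packing condition in each direction using the distance facts: vertices of $S_i$ lie in a common component, so their pairwise $G_i$-distances equal their $G$-distances, giving that each $S_i$ is a packing in $G_i$ exactly when the same-component pairs of $S$ are at distance at least $3$, while the different-component pairs of $S$ are automatically at distance at least $3$. The maximality is then handled contrapositively and is symmetric in the two directions: if some vertex $v\in V(G_i)\setminus S_i$ can be added to $S_i$ while keeping it a packing in $G_i$, then $v$ has $G$-distance at least $3$ from every vertex of $S_i$ and, being in a different component, infinite distance to $S_{3-i}$, so $S\cup\{v\}$ is a packing in $G$; running this implication both ways shows that $S$ is maximal in $G$ if and only if both $S_1$ and $S_2$ are maximal in their respective components.

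For part~(2), the analogous engine is that every edge of $G$ lies inside a single component. For the forward direction, independence of $I$ restricts to independence of each $I_i$, and if $v\in V(G_i)$ is dominated in $G$ by some neighbor $u\in I$, then $u$ shares the component of $v$, so $u\in I_i$ and $v$ is dominated in $G_i$. For the converse, $I=I_1\sqcup I_2$ is independent because no edge crosses between the components, and any $v\in V(G_i)$ is dominated by $I_i\subseteq I$; hence $I$ is an independent dominating set of $G$ precisely when each $I_i$ is an independent dominating set of $G_i$.

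There is no serious obstacle here, as the statement is genuinely a componentwise bookkeeping lemma. The only points that require care are fixing the convention $d_G(u,v)=\infty$ for vertices in different components, so that such pairs trivially satisfy the distance-at-least-$3$ requirement of a packing, and phrasing maximality as non-extendability, so that when a packing or independent set is enlarged by a single new vertex, that vertex necessarily lies in exactly one of the two components and the argument can be localized there.
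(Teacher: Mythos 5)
Your proposal is correct. Note that the paper states this as an \emph{Observation} and supplies no proof at all, treating both parts as immediate; so there is nothing in the source to compare against, and your componentwise argument is exactly the routine verification the authors are implicitly relying on. The one point worth flagging is that the paper defines distance via shortest paths and a packing via pairwise distance at least $3$ (equivalently, pairwise disjoint closed neighborhoods), so for vertices in different components you should either adopt the convention $d_G(u,v)=\infty$ explicitly, as you do, or work with the disjoint-closed-neighborhoods formulation, which sidesteps the convention entirely; either way the cross-component pairs are vacuously fine and the rest of your argument goes through.
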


In a graph, edges are unordered pairs of vertices. If we replace the edges of a graph with ordered pairs of vertices, it gives us a {\em directed graph}, or a {\em digraph}. The edges of a digraph are called {\em arcs}.
Let $D$ be a digraph with vertex set $V(D)$ and arc set $A(D)$.
For $a, b \in V(D)$, we use $(a,b)$ to denote an arc from $a$ to $b$.
A digraph is {\em antisymmetric} if at most one among $\{(a,b), (b,a)\}$ is in $A(D)$ for each $a,b \in V(D)$.

For $v \in V(D)$, the {\em in-neighbor} of $v$ is a vertex $u$ in $D$ such that $(u,v) \in A(D)$, and the {\em out-neighbor} of $v$ is a vertex $w$ in $D$ such that $(v,w) \in A(D)$.
The {\em in-degree} of $v$ is the number of in-neighbors of $v$, and is denoted by $\deg^{-}_D(v)$.
The {\em out-degree} of $v$ is the number of out-neighbors of $v$, and is denoted by $\deg^{+}_D(v)$.
Note that $\sum_{v \in V(D)} \deg^{-}_D(v) = \sum_{v \in V(D)} \deg^{+}_D(v)$.
A {\em directed path} in $D$ is a path $v_1v_2 \ldots v_k$ in $D$ such that $(v_i,v_{i+1}) \in A(D)$ for all $i \in [k-1]$.
An {\em orientation} $D$ of a graph $G$ is a antisymmetric digraph whose underlying graph is $G$.

We will use the following lemma in the proof of Theorem~\ref{thm:cubic-i-rho2}. This is already known, for example, by Theorem 2 in \cite{farrugia2005orientable}, but we include a proof for the completeness of this manuscript.

\begin{lemma}\label{lem:no_source}
Every connected multigraph with minimum degree at least $2$ admits an orientation with no sources.
\end{lemma}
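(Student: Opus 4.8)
The plan is to reformulate the conclusion: an orientation has no sources precisely when every vertex has in-degree at least $1$. So I want to orient the edges of $G$ so that each vertex receives at least one incoming arc. The naive idea of rooting a spanning tree and orienting every tree edge away from the root almost works---every non-root vertex then inherits an incoming arc from its parent---but it leaves the root as a source. The key fix is to replace the single root by a cycle, which $\delta(G)\ge 2$ guarantees exists: since $G$ is connected with minimum degree at least $2$, it is not a tree (indeed $|E(G)| = \tfrac12\sum_{v}\deg_G(v) \ge |V(G)|$), hence $G$ contains a cycle $C = v_0v_1\cdots v_{k-1}v_0$.

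The construction proceeds in three stages. First, orient $C$ cyclically, taking the arcs $(v_0,v_1),(v_1,v_2),\ldots,(v_{k-1},v_0)$; this gives every vertex of $C$ in-degree at least $1$ within $C$. Second, stratify the remaining vertices by their distance to $C$: for $u \in V(G)\setminus V(C)$, let $d(u) = \min_{w \in V(C)} d_G(u,w)$, and (using connectivity, so that $d(u)$ is finite) choose a neighbor $p(u)$ of $u$ with $d(p(u)) = d(u)-1$, orienting the edge $\{p(u),u\}$ as the arc $(p(u),u)$. Third, orient all edges not yet oriented in an arbitrary way. Every vertex of $C$ is handled by the first stage, and every vertex outside $C$ receives the incoming arc $(p(u),u)$ from the second stage, so no vertex is a source.

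The one point that genuinely needs care---and the main obstacle---is checking that this recipe is a legitimate orientation, i.e.\ that no edge is assigned two orientations and that the chosen arcs are mutually consistent. A cycle edge joins two vertices at distance $0$ from $C$, whereas a designated arc $(p(u),u)$ has its head $u$ at distance at least $1$; thus cycle edges and ``parent'' edges are disjoint. Moreover, no edge can be the parent edge of both its endpoints: if $(p(u),u)$ and $(p(x),x)$ were the same edge, then $u = p(x)$ and $x = p(u)$ would force $d(u) = d(x)-1$ and $d(x) = d(u)-1$ simultaneously, which is impossible. Hence each edge is oriented by at most one of the first two stages, the leftover edges are oriented once in the third stage, and the resulting antisymmetric digraph is an orientation of $G$ with no sources, as required.
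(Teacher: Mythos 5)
Your proof is correct, but it takes a genuinely different route from the paper. The paper starts from an \emph{arbitrary} orientation and iteratively decreases the number of sources: given a source $v$, a degree count over the set of vertices reachable from $v$ (using $\delta\ge 2$) produces a reachable vertex $w$ of in-degree at least $2$, and reversing a directed $v$--$w$ path kills the source at $v$ without creating a new one. You instead give a one-pass construction: extract a cycle $C$ (which $\delta\ge2$ guarantees, since $|E|\ge|V|$), orient $C$ cyclically, orient a shortest-path layering away from $C$ so that every vertex outside $C$ receives its parent arc, and orient the leftover edges arbitrarily; your consistency checks (cycle edges versus parent edges, and no edge being the parent edge of both endpoints) are the right ones and go through, also for multigraphs. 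Your approach isolates the hypothesis actually needed --- connectivity plus the existence of one cycle, which is sharp since any orientation of a tree has a source by counting $|V|-1$ arcs against $|V|$ required in-arcs --- and so proves a marginally more general statement. The paper's path-reversal argument is more local and is the style of argument that extends to orientation results with prescribed in-degree constraints (cf.\ the reference it cites), at the cost of needing the small reachability/degree-sum contradiction to find the vertex $w$. Both arguments are elementary and complete.
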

\begin{proof}
 Let $H$ be a multigraph with minimum degree $2$.
    For an orientation $D$ of $H$, let $s(D)$ be the number of sources of $D$.
    It is sufficient to show that for every orientation $D$ of $H$ with $s(D) \geq 1$, there is another orientation $D'$ of $H$ with $s(D') < s(D)$.
    
    Suppose there is a source, say $v$, of $D$.
    Let $\text{Reach}^+_D(v)$ be the set of all vertices that is an endpoint of a directed path starting from $v$.
    Suppose all vertices in $\text{Reach}^+_D(v)$ has in-degree at most $1$.
    This implies that each of them has out-degree at least $1$.
    Let $K$ be the subgraph of $D$ induced by $X := \{v\} \cup \text{Reach}^+_D(v)$.
    Then,
    \[|X|-1 = |\text{Reach}^+_D(v)| \geq \sum_{x \in X}d^-_K(x) = \sum_{x \in X}d^+_K(x) \geq |X|,\] which is a contradiction.

    Now we may assume that there is $w \in \text{Reach}^+_D(v)$ whose in-degree is at least $2$. Let $P$ be a directed path in $D$ that starts from $v$ and terminates at $w$. We define $D'$ as the orientation of $H$ obtained from $D$ by reversing all orientation of the edges in $P$. 
    Then we have $\deg_{D'}^-(v) = 1$, so $v$ is not a source in $D'$.
    So is $w$; since $\deg_D^-(w) \geq 2$ and $\deg_{D}^+(w) \geq 0$, the above process makes $\deg_{D'}^-(w) \geq 1$ and $\deg_{D'}^+(w) \geq 1$.
    For all vertices other than $v$ and $w$, including all intermediate vertices in $P$, the in-degree and out-degree are preserved.
    This implies that $s(D') < s(D)$ as desired.
\end{proof}

\section{Proof of Theorem~\ref{thm:cubic-i-rho2}}\label{sec:proof}
If $G$ consists of a single vertex, then the statement is obvious.
Thus, we may assume that there are at least two vertices in $G$.
By Observation~\ref{obs:connected}, it is sufficient to show that the statement holds for connected graphs.
Let $G$ be a connected subcubic graph on a finite vertex set $V$, 
and $S$ be a maximal packing of $G$. 
Let $N=N_G(S)$ and $R=V\setminus{N_G[S]}$.
Note that since $S$ is a packing of $G$, every vertex in $N$ has exactly one neighbor in $S$.

We first observe that $N$ is a dominating set of $G$.
Since $G$ is connected, every vertex in $S$ must have a neighbor in $N$.
On the other hand, by the maximality of $S$, every vertex in $R$ must be adjacent to a vertex in $N$. 
Therefore, $N$ dominates the whole graph $G$.
We will modify $N$ to obtain an independent set in $G$ of size at most $3|S|$ that still dominates $G$.

Let $H = G[N]$.
Since every vertex in $N$ has a neighbor in $S$ and has degree at most $3$ in $G$, the induced subgraph $H$ has maximum degree at most $2$.
This implies that $H$ is the disjoint union of cycles $C_1,\ldots,C_p$, paths $P_1,\ldots,P_q$, and isolated vertices.
We may regard the set of $P_i$'s of length $1$ in $H$ as an induced matching $M$.
Let $W$ be the set of endpoints of the edges in $M$, that is, $H[W] = M$.

Given $B \subseteq N$, let $X(B)$ be the set of all vertices $s$ in $S$ such that $|N_G(s)| = 3$ and $N_G(s) \subseteq (N\setminus B) \cap W$. We take a set $A \subseteq N$ that satisfies the following:
\begin{enumerate}[(i)]
\item For each path $P_i$ of length at least $2$, $A$ contains the two endpoints of $P_i$.
\item $A$ is a maximal independent set in $H$ satisfying (i).
\item $|X(A)|$ is minimum among all choices satisfying (i) and (ii).
\end{enumerate}
Note that, by (ii), $A$ contains all isolated vertices of $H$. We claim that the minimality assumption of (iii) actually implies that $X(A)$ is an empty set.

\begin{claim} 
$|X(A)| =0$.
\end{claim}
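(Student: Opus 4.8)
The natural first approach is a local exchange argument driven by the minimality condition (iii). Suppose toward a contradiction that $X(A) \neq \emptyset$ and fix $s \in X(A)$. Then $s$ has degree $3$ and its three neighbors $n_1, n_2, n_3$ all lie in $W$ and outside $A$. Since $A$ meets each edge of the induced matching $M$ in exactly one endpoint, the matching partner $m_i$ of each $n_i$ (so that $n_i m_i \in M$) satisfies $m_i \in A$. The plan is to flip one such edge, passing from $A$ to $A' = (A \setminus \{m_i\}) \cup \{n_i\}$: this keeps $A'$ a maximal independent set of $H$ satisfying (i), and it removes $s$ from the bad set since now $n_i \in A' \cap N_G(s)$.

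The key local fact, which I would establish first, is that a flip alters the status of at most two vertices of $S$. Because $S$ is a packing, every vertex of $N$ has a \emph{unique} neighbor in $S$; hence the only vertex affected by inserting $n_i$ is its $S$-neighbor $s$, and the only vertex affected by deleting $m_i$ is its $S$-neighbor $t_i$. Thus $X(A') = (X(A) \setminus \{s\}) \cup (\{t_i\} \cap X(A'))$, so $|X(A')| = |X(A)| - 1 + [\,t_i \in X(A')\,]$. I would also note $t_i \neq s$ (otherwise $m_i$ would be a neighbor of $s$ lying in $A$, contradicting $s \in X(A)$), which also disposes of the degenerate triangle configuration where two neighbors of $s$ span a single matching edge. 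If some flip yields $t_i \notin X(A')$ we contradict (iii), and we are done.

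The main obstacle is precisely that a single flip may manufacture a fresh bad vertex: it can happen that $t_i \in X(A')$ for \emph{all three} $i$, so no one move decreases $|X|$. To get past this I would drop the purely local move and instead prove globally that a fully covering choice exists, after which (iii) forces $|X(A)| = 0$. Let $Z$ be the set of degree-$3$ vertices of $S$ whose neighbors all lie in $W$, and let $Z_0 \subseteq Z$ consist of those whose three neighbors lie in three \emph{distinct} edges of $M$; any $s \in Z \setminus Z_0$ has two neighbors spanning one matching edge and is covered whichever endpoint of that edge is chosen. Form the bipartite graph $B$ with parts $Z_0$ and $M$, joining $s$ to each matching edge that contains a neighbor of $s$. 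Every $s \in Z_0$ has degree exactly $3$ in $B$, while every matching edge has degree at most $2$, since each of its two endpoints has a single $S$-neighbor.

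Double counting the edges of $B$ between any $Z' \subseteq Z_0$ and its neighborhood gives $3|Z'| \le 2\,|N_B(Z')|$, hence $|N_B(Z')| \ge \tfrac32 |Z'| \ge |Z'|$, so by Hall's theorem $B$ admits a matching saturating $Z_0$. I would then select, for each matched pair, the endpoint of the matching edge adjacent to its partner in $Z_0$ (and choose endpoints arbitrarily on the unmatched edges of $M$), and complete the choice to a maximal independent set containing both endpoints of every long path; this yields a set $A^{\ast}$ satisfying (i) and (ii) with every vertex of $Z$ covered, i.e. $X(A^{\ast}) = \emptyset$. Since $A$ minimizes $|X(\cdot)|$ by (iii), we conclude $|X(A)| \le |X(A^{\ast})| = 0$. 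The one point to verify carefully is that the matching-based selection really is consistent with a maximal independent set of $H$ obeying (i); this is immediate because distinct edges of $M$ are distinct components of $H$, and the vertices constrained by (i) lie on long paths, which are disjoint from $W$.
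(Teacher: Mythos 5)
Your argument is correct, but it takes a genuinely different route from the paper. The paper builds an auxiliary multigraph $Q$ on the set $X(N\setminus W)$ (your $Z$), with one edge per matching edge of $M$ joining two such vertices; it then uses the minimality of $|X(A)|$ together with a path-reversal switching argument to show that the relevant components form a cubic multigraph, and finally invokes an orientation lemma (every connected multigraph of minimum degree $2$ has an orientation with no sources) to reselect endpoints of matching edges so that every vertex of $X(A)$ acquires a neighbour in the new set. You instead work with the bipartite incidence graph between $Z_0$ and $M$, verify Hall's condition via the degree count $3|Z'| \le 2|N_B(Z')|$, and extract a system of distinct representatives that covers all of $Z_0$ at once; the vertices of $Z\setminus Z_0$ are covered for free, and condition (iii) then forces $|X(A)|=0$. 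Your version is arguably cleaner: it produces a single admissible set $A^{\ast}$ with $X(A^{\ast})=\emptyset$ directly, uses the minimality assumption only once at the very end, and dispenses with both the orientation lemma and the delicate switching argument needed to establish $3$-regularity of the auxiliary graph. The two tools are of course close relatives (the paper's orientation of $Q'$ is essentially a choice of representative for each vertex from its incident matching edges), but your Hall-type formulation avoids the detour through regularity. Your supporting checks are all sound: each vertex of $N$ has a unique neighbour in $S$ because $S$ is a packing, so each edge of $M$ has degree at most $2$ in $B$; and the selection is compatible with (i) and (ii) because the edges of $M$ are components of $H$ disjoint from the longer paths. One cosmetic remark: reserve a different letter for your set $Z$, since the paper reuses $Z$ later in the main proof for the independent dominating set of $G[T]$.
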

\begin{proof}
Suppose $|X(A)| \geq 1$. Draw a subcubic multigraph $Q$ on $X(N\setminus W)$ such that $uv \in E(Q)$ if and only if there is an edge $u'v' \in M$ such that $uu', vv' \in E(G)$.
Let $Q'$ be the union of all components of $Q$ that contains a vertex in $X(A)$.
Note that $V(Q')$ is nonempty since $|X(A)| \geq 1$.
See Figure~\ref{fig:Q} for an illustration of the construction of $Q$ and $Q'$.
\begin{figure}[htbp]
\centering
\begin{tikzpicture}
\begin{scope}[shift={(0,0)}, scale=1.2]
\filldraw[fill=white!80!black, draw=white!80!black, rounded corners] (0,-0.4) rectangle (9,0.5){};
\filldraw[fill=white!90!black, draw=white!90!black, rounded corners] (0,-2) rectangle (9,-0.7){};

\node at (9.5,0) {\LARGE$S$};
\node at (9.5,-1.2) {\LARGE$N$};

\node[circle,fill=black, draw=black, inner sep=0.04cm, label={90:$v_1$}] (v1) at (1,0) {};
\node[circle,fill=black, draw=black, inner sep=0.04cm, label={90:$v_2$}](v2) at (2,0) {};
\node[circle,fill=black, draw=black, inner sep=0.04cm, label={90:$v_3$}](v3) at (3,0) {};
\node[circle,fill=black, draw=black, inner sep=0.04cm, label={90:$v_4$}](v4) at (4,0) {};

\node[rectangle,fill=white, draw=black, inner sep=0.05cm](v11) at (0.75,-1) {};
\node[rectangle,fill=white, draw=black, inner sep=0.05cm](v12) at (1,-1) {};
\node[rectangle,fill=white, draw=black, inner sep=0.05cm](v13) at (1.25,-1) {};

\node[rectangle,fill=white, draw=black, inner sep=0.05cm](v21) at (1.75,-1) {};
\node[rectangle,fill=white, draw=black, inner sep=0.05cm](v22) at (2,-1) {};
\node[rectangle,fill=white, draw=black, inner sep=0.05cm](v23) at (2.25,-1) {};

\node[rectangle,fill=white, draw=black, inner sep=0.05cm](v31) at (2.75,-1) {};
\node[rectangle,fill=white, draw=black, inner sep=0.05cm](v32) at (3,-1) {};
\node[rectangle,fill=white, draw=black, inner sep=0.05cm](v33) at (3.25,-1) {};

\node[rectangle,fill=white, draw=black, inner sep=0.05cm](v41) at (3.75,-1) {};
\node[rectangle,fill=white, draw=black, inner sep=0.05cm](v42) at (4,-1) {};
\node[rectangle,fill=white, draw=black, inner sep=0.05cm](v43) at (4.25,-1) {};

\draw (v11) circle (0.15cm);
\draw (v13) circle (0.15cm);
\draw (v31) circle (0.15cm);
\draw (v33) circle (0.15cm);
\draw (v42) circle (0.15cm);
\draw (v43) circle (0.15cm);

\node[rotate around={90:(0:0)}] at (2,0.6) {$\in$};
\node at (2,1) {$X(A)$};

\draw (v1) -- (v11);
\draw (v1) -- (v12);
\draw (v1) -- (v13);

\draw (v2) -- (v21);
\draw (v2) -- (v22);
\draw (v2) -- (v23);

\draw (v3) -- (v31);
\draw (v3) -- (v32);
\draw (v3) -- (v33);

\draw (v4) -- (v41);
\draw (v4) -- (v42);
\draw (v4) -- (v43);

\draw[ultra thick] (v11) -- (v12);
\draw[ultra thick] (v13) -- (v21);
\draw[ultra thick] (v22) to[out=270, in = 270] (v43);
\draw[ultra thick] (v23) -- (v31);
\draw[ultra thick] (v32) to[out=270, in=270] (v42);
\draw[ultra thick] (v33) -- (v41);

\node[circle,fill=black, draw=black, inner sep=0.04cm, label={90:$u_1$}] (u1) at (5,0) {};
\node[circle,fill=black, draw=black, inner sep=0.04cm, label={90:$u_2$}](u2) at (6,0) {};
\node[circle,fill=black, draw=black, inner sep=0.04cm, label={90:$u_3$}](u3) at (7,0) {};
\node[circle,fill=black, draw=black, inner sep=0.04cm, label={90:$u_4$}](u4) at (8,0) {};

\node[rectangle,fill=white, draw=black, inner sep=0.05cm](u11) at (4.75,-1) {};
\node[rectangle,fill=white, draw=black, inner sep=0.05cm](u12) at (5,-1) {};
\node[rectangle,fill=white, draw=black, inner sep=0.05cm](u13) at (5.25,-1) {};

\node[rectangle,fill=white, draw=black, inner sep=0.05cm](u21) at (5.75,-1) {};
\node[rectangle,fill=white, draw=black, inner sep=0.05cm](u22) at (6,-1) {};
\node[rectangle,fill=white, draw=black, inner sep=0.05cm](u23) at (6.25,-1) {};

\node[rectangle,fill=white, draw=black, inner sep=0.05cm](u31) at (6.75,-1) {};
\node[rectangle,fill=white, draw=black, inner sep=0.05cm](u32) at (7,-1) {};
\node[rectangle,fill=white, draw=black, inner sep=0.05cm](u33) at (7.25,-1) {};

\node[rectangle,fill=white, draw=black, inner sep=0.05cm](u41) at (7.75,-1) {};
\node[rectangle,fill=white, draw=black, inner sep=0.05cm](u42) at (8,-1) {};
\node[rectangle,fill=white, draw=black, inner sep=0.05cm](u43) at (8.25,-1) {};

\draw (u11) circle (0.15cm);
\draw (u21) circle (0.15cm);
\draw (u31) circle (0.15cm);
\draw (u33) circle (0.15cm);
\draw (u42) circle (0.15cm);
\draw (u43) circle (0.15cm);

\draw (u1) -- (u11);
\draw (u1) -- (u12);
\draw (u1) -- (u13);

\draw (u2) -- (u21);
\draw (u2) -- (u22);
\draw (u2) -- (u23);

\draw (u3) -- (u31);
\draw (u3) -- (u32);
\draw (u3) -- (u33);

\draw (u4) -- (u41);
\draw (u4) -- (u42);
\draw (u4) -- (u43);

\draw[ultra thick] (u11) -- (u12);
\draw[ultra thick] (u13) -- (u21);
\draw[ultra thick] (u22) to[out=270, in = 270] (u43);
\draw[ultra thick] (u23) -- (u31);
\draw[ultra thick] (u32) to[out=270, in=270] (u42);
\draw[ultra thick] (u33) -- (u41);

\node at (4.5,-2.5) {\LARGE $\Downarrow$};
\end{scope}

\begin{scope}[shift={(4,-5)}]
\filldraw[fill=white!85!black, draw=white!85!black, rounded corners](-1,-2.5) rectangle (2,1){};
\node[circle, fill=black, draw=black, inner sep=0.04cm, label={90:$v_1$}](v1) at (0,0){};
\node[circle, fill=black, draw=black, inner sep=0.04cm, label={-90:$v_2$}](v2) at (0,-1){};
\node[circle, fill=black, draw=black, inner sep=0.04cm, label={-90:$v_3$}](v3) at (1,-1){};
\node[circle, fill=black, draw=black, inner sep=0.04cm, label={90:$v_4$}](v4) at (1,0){};

\draw (v1) -- (v2) -- (v3);
\draw (v2) -- (v4);
\draw (v3) to[out=45,in=-45](v4);
\draw (v3) to[out=135,in=-135](v4);
\draw (0,0.3) circle (0.3cm);
\node[rotate around={-90:(0,0)}] at (0,-1.6) {$\in$};
\node at (0,-2) {$X(A)$};

\node at (-1.5,-1) {\LARGE $Q'$};

\draw[rounded corners] (-2.5,-3) rectangle (5,1.5){};

\node at (-3,-1) {\LARGE $Q$};

\node[circle, fill=black, draw=black, inner sep=0.04cm, label={90:$u_1$}](u1) at (3,0){};
\node[circle, fill=black, draw=black, inner sep=0.04cm, label={-90:$u_2$}](u2) at (3,-1){};
\node[circle, fill=black, draw=black, inner sep=0.04cm, label={-90:$u_3$}](u3) at (4,-1){};
\node[circle, fill=black, draw=black, inner sep=0.04cm, label={90:$u_4$}](u4) at (4,0){};

\draw (u1) -- (u2) -- (u3);
\draw (u2) -- (u4);
\draw (u3) to[out=45,in=-45](u4);
\draw (u3) to[out=135,in=-135](u4);
\draw (3,0.3) circle (0.3cm);
\end{scope}
\end{tikzpicture}
    \caption{An example of construction of $Q$ and $Q'$ from $G$. Edges in $M$ are drawn by thick lines, and the vertices in $A$ are circled.}
    \label{fig:Q}
\end{figure}
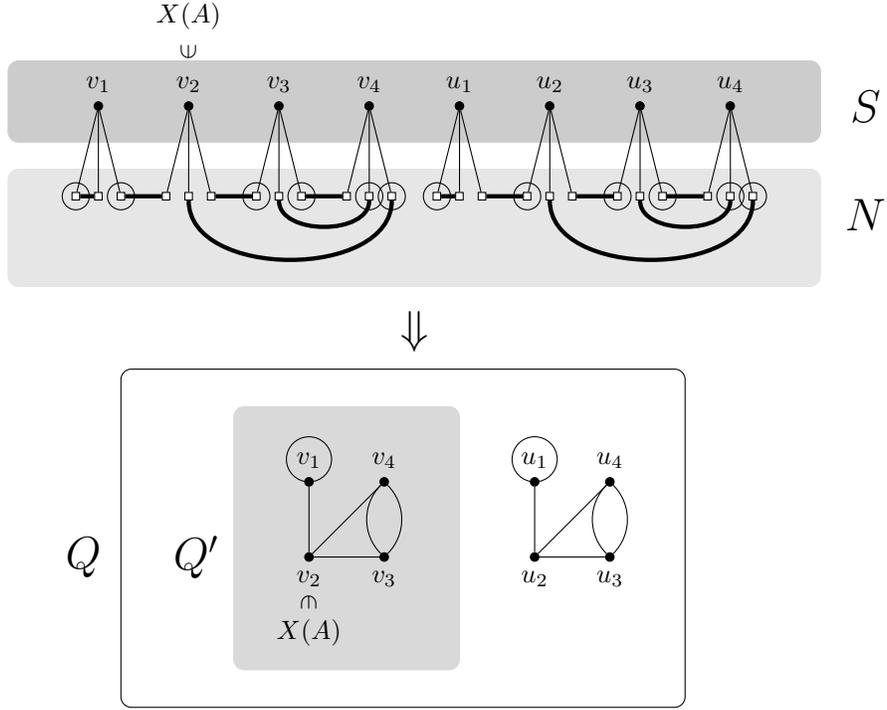

We will show that $Q'$ is a cubic graph to apply Lemma~\ref{lem:no_source}.
Suppose to the contrary that $v \in V(Q')$ and $\deg_{Q'}(v) \le 2$. 
Let $w$ be the vertex in $X(A)$ that has minimum distance to $v$ in $Q$.
Here, we assume $w = v$ if $v \in X(A)$.
Let $w_1 w_2 \ldots w_k$ be the shortest path from $v$ to $w$ in $Q'$, where $w_1 = v$ and $w_k = w$.
Clearly, by the minimality of $k$, each $w_i$ should not be in $X(A)$ for $i\in[k-1]$.
For each $i \in [k-1]$, let $x_iy_i$ be an edge in $M$ such that $x_iw_i, y_iw_{i+1} \in E(G)$.
Consider the path $w_1x_1y_1w_2 \ldots x_{k-1}y_{k-1}w_k$ in $G$.

Recall that, since $v \in X(N\setminus W)$, $v$ has three neighbors in $N$, say $N_G(v) = \{u_1, u_2, x_1\}$, and there are $v_1, v_2 \in N$ such that $u_1v_1, u_2v_2\in M$. 
Since $\deg_{Q'}(v) \le 2$, without loss of generality, we may assume that $v_2$ is not adjacent to a vertex in $X(N\setminus W)$.
Let
\[A' = (A \setminus (\{v_2\}\cup \{x_i : i \in [k-1]\})) \cup \{u_2\}\cup \{y_i : i \in [k-1]\}.\]
See Figure~\ref{fig:Q'} for an illustration of $A'$ obtained from $A$ in Figure~\ref{fig:Q}.
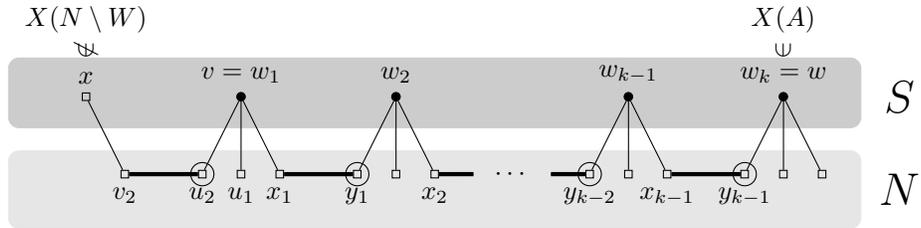
\begin{figure}[htbp]
\centering
\begin{tikzpicture}[scale=1.03]
\filldraw[fill=white!80!black, draw=white!80!black, rounded corners] (-1,-0.4) rectangle (10,0.5){};
\filldraw[fill=white!90!black, draw=white!90!black, rounded corners] (-1,-1.7) rectangle (10,-0.7){};

\node at (10.5,0) {\LARGE $S$};
\node at (10.5,-1.2) {\LARGE $N$};

\node[rectangle, draw=black, inner sep=0.05cm, label={90:$x$}](x) at (0,0){};    
\node[circle, draw=black,fill=black, inner sep=0.04cm, label={90:$v=w_1$}](w1) at (2,0){};
\node[circle, draw=black,fill=black, inner sep=0.04cm, label={90:$w_2$}](w2) at (4,0){};
\node[circle, draw=black,fill=black, inner sep=0.04cm, label={90:$w_{k-1}$}](wk-1) at (7,0){};
\node[circle, draw=black,fill=black, inner sep=0.04cm, label={90:$w_k = w$}](wk) at (9,0){};

\node[rotate around={90:(0,0)}] at (0,0.6) {$\not\in$};
\node at (0,1) {$X(N \setminus W)$};

\node[rectangle, draw=black, inner sep=0.05cm, label={-90:$v_2$}](v2) at (0.5,-1){};

\node[rectangle, draw=black, inner sep=0.05cm, label={-90:$u_2$}](u2) at (1.5,-1) {};
\node[rectangle, draw=black, inner sep=0.05cm, label={-90:$u_1$}](u1) at (2,-1) {};
\node[rectangle, draw=black, inner sep=0.05cm, label={-90:$x_1$}](x1) at (2.5,-1) {};

\node[rectangle, draw=black, inner sep=0.05cm, label={-90:$y_1$}](y1) at (3.5,-1) {};
\node[rectangle, draw=black, inner sep=0.05cm](z2) at (4,-1) {};
\node[rectangle, draw=black, inner sep=0.05cm, label={-90:$x_2$}](x2) at (4.5,-1) {};

\node[rectangle, draw=black, inner sep=0.05cm, label={-90:$y_{k-2}$}](yk-2) at (6.5,-1) {};
\node[rectangle, draw=black, inner sep=0.05cm](zk-1) at (7,-1) {};
\node[rectangle, draw=black, inner sep=0.05cm, label={-90:$x_{k-1}$}](xk-1) at (7.5,-1) {};

\node[rectangle, draw=black, inner sep=0.05cm, label={-90:$y_{k-1}$}](yk-1) at (8.5,-1) {};
\node[rectangle, draw=black, inner sep=0.05cm](zk) at (9,-1) {};
\node[rectangle, draw=black, inner sep=0.05cm](xk) at (9.5,-1) {};

\node[rotate around={90:(0,0)}] at (9,0.6){$\in$}; 
\node at (9,1){$X(A)$};

\draw (x) -- (v2) -- (u2) -- (w1) -- (u1);
\draw (w1)-- (x1) -- (y1) -- (w2) -- (z2);
\draw (w2) -- (x2) -- (5,-1);

\draw[ultra thick] (v2) -- (u2);
\draw[ultra thick] (x1) -- (y1);
\draw[ultra thick] (x2) -- (5,-1);
\draw[ultra thick] (6,-1) -- (yk-2);
\draw[ultra thick] (xk-1) -- (yk-1);

\node at (5.5,-1){$\cdots$};

\draw (6,-1) -- (yk-2) -- (wk-1) -- (zk-1);
\draw (wk-1) -- (xk-1) -- (yk-1) -- (wk) -- (zk);
\draw (wk) -- (xk);

\draw (u2) circle (0.15cm);
\draw (y1) circle (0.15cm);
\draw (yk-2) circle (0.15cm);
\draw (yk-1) circle (0.15cm);
\end{tikzpicture}
    \caption{A path $w_1x_1y_1w_2 \ldots x_{k-1}y_{k-1}w_k$ in $G$ and $A'$. Edges in $M$ are drawn by thick lines, and the vertices in $A'$ are circled.}
    \label{fig:Q'}
\end{figure}

Clearly, $A'$ satisfies (i) and (ii): note that $A'$ is obtained from $A$ by replacing some vertices in $A \cap W$ with the other endpoints in the corresponding edges in $M$.
However, now we have $X(A') = X(A) \setminus \{w\}$, which is a contradiction to the minimality assumption of (iii) of $A$.
Thus we conclude that $Q'$ is a cubic graph, and in particular, $Q'$ is a multigraph having minimum degree at least $2$.
Then by Lemma~\ref{lem:no_source}, there is an orientation $D$ of $Q'$ with no source.
For each $(u,v) \in A(D)$, there is an edge $u'v'$ in $M$ such that $uu', vv' \in E(G)$.
We let $A'' = (A \setminus \{u' : (u,v) \in A(D)\}) \cup \{v' : (u,v) \in A(D)\}$.
It is clear that $A''$ satisfies (i) and (ii) because $A''$ is obtained from $A$ by replacing some vertices in $A \cap W$ with the other endpoints in the corresponding edges in $M$.
On the other hand, since $D$ has no source, $|X(A'')| =0 < 1 \le |X(A)|$, which is again a contradiction to the minimality assumption of (iii) of $A$.
Therefore, it must be $|X(A)| = 0$.
\end{proof}

Now, by the choice of $A$, $N$ is dominated by $A$ in $G$.
We finally modify $A$ to dominate all vertices in $S \cup R$.
Let $T$ be the set of all vertices in $R$ that are not dominated by $A$.
We construct a set $\hat{A}$ by adding to $A$
\begin{enumerate}[(a)]
    \item the set, say $S'$, of all vertices $s \in S$ that are not dominated by $A$, that is, $N(s) \subseteq 
    N\setminus A$, and
    \item an independent dominating set, say $Z$, of $G[T]$.
\end{enumerate}
It is obvious that $S'$ is an independent set.
Clearly, no vertices of $A$ and $Z$ are adjacent to a vertex in $S'$.
Also, by the definition of $Z$, no vertices of $Z$ are adjacent to $A$.
Since $S' \cup A$ dominates $V \setminus T$ and $Z$ dominates $T$, $\hat{A}$ is an independent dominating set of $G$.
We finally claim that $\hat{A}$ is the desired set, that is $|\hat{A}| \leq 3|S|$.

Let $S_i$ be the set of all vertices in $S'$ of degree $i$.
We will show that there is a one-to-one function from $\hat{A}$ to $N \sqcup S_1 \sqcup S_2$, which implies $|\hat{A}| \leq |N|+|S_1|+|S_2|$.
Then, since $|N| \leq |S_1| + 2|S_2| + 3|S_3|$, we have 
\[|\hat{A}| \leq |N| + |S_1| + |S_2| \leq 2|S_1| + 3|S_2| + 3|S_3| \leq 3|S|.\]

Let $s \in S_3$.
By the assumption (i) and (ii) of $A$ and the observation $|X(A)| = 0$, we note that at least one of the neighbors, say $s^*$, of $s$ has two neighbors in $N$.
For each $r \in Z$, there must be a neighbor, say $r^*$, of $r$ that is in $N$.
Now define a function $f: \hat{A} \rightarrow N\sqcup S_1 \sqcup S_2$ by
\[f(v) = \begin{cases} v & \text{ for }v \in A \sqcup S_1 \sqcup S_2 \\ v^* & \text{ for }v \in S_3 \sqcup Z \end{cases}\]
Note that for every pair of $r^*$ and $s^*$, they do not belong to $A$ and that they cannot be the same since $s^*$ does not have a neighbor in $R$.
Thus, it is clear that $f$ is a well defined one-to-one function.
This completes the proof.

\section{Remark}\label{sec:future}
We have investigated how small the minimum independent number can be when the packing number is given in subcubic graphs.
The first question we can ask is a generalization of Theorem~\ref{thm:cubic-i-rho2}, as an analogue of the observation by Henning, L\"{o}wenstein and Rautenbach that $\gamma(G) \leq \Delta(G)\rho(G)$ for every graph.
\begin{question}\label{que:gen}
    Is $i(G) \le \Delta(G) \rho(G)$ for every graph $G$?
\end{question}
For $\Delta(G) \le 2$, Question~\ref{que:gen} is obviously true, and for $\Delta(G) = 3$, it is answered to be true by Theorem~\ref{thm:cubic-i-rho2}.
Thus, the first interesting case is when $\Delta(G) = 4$.

Another interesting question is to characterize all graphs where Theorem~\ref{thm:cubic-i-rho2} is tight.
As we observed in the introduction, $H_1, H_2, H_3$ and $K_{3,3}$ in Figure~\ref{fig:tight} has independent domination number exactly three times the packing number.
However, we do not know whether they are the only examples for the tightness of Theorem~\ref{thm:cubic-i-rho2}.
\begin{question}\label{que:tightness}
If $G$ is a connected graph with $\Delta(G) \le 3$, then for which graphs $i(G) = 3 \rho(G)$ hold?
\end{question}

Once we know the answer for Question~\ref{que:tightness}, it is worth to ask if the ratio $i(G)/\rho(G)$ becomes strictly smaller than $3$ for subcubic graphs if we exclude those satisfying $i(G) = 3 \rho(G)$.
Especially, this is related to the following conjecture by Henning, L\"{o}wenstein, Rautenbach~\cite[Conjecture 3]{henning2011dominating}:

\begin{conjecture}\label{conj:2rho}
    Every connected subcubic graph $G$ except the three graphs $H_1, H_2, H_3$ satisfies $\gamma(G) \le 2 \rho(G)$.
\end{conjecture}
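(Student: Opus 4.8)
The plan is to induct on $\rho(G)$, taking the Henning--L\"{o}wenstein--Rautenbach characterization as the base case and reducing larger packing number to smaller by deleting one ball of a maximum packing. For the base case $\rho(G)=1$, every two vertices of $G$ lie within distance $2$, so a single vertex $s$ is a maximal packing and, by the same observation used before Theorem~\ref{thm:cubic-i-rho2}, $N_G(s)$ dominates $G$; hence $\gamma(G)\le \deg_G(s)\le 3$. Since $\rho(G)=1$, the cited characterization says that, among connected subcubic graphs, $\gamma(G)=3=3\rho(G)$ holds exactly for $G\in\{H_1,H_2,H_3\}$, so every other connected subcubic graph with $\rho(G)=1$ satisfies $\gamma(G)\le 2=2\rho(G)$. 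This settles the conjecture whenever $\rho(G)=1$.

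For $\rho(G)=k\ge 2$, I would fix a maximum packing $S$ with $|S|=k$ and a carefully chosen vertex $s\in S$, and pass to a smaller graph $G'$ obtained from $G-N_G[s]$ by a local modification near the boundary of $N_G[s]$ designed to preserve the pairwise distances of the surviving vertices (for instance by adding a few edges among the boundary). Re-dominating the deleted ball is then cheap: $s$ alone dominates all of $N_G[s]$, and a minimum dominating set $D'$ of $G'$ dominates the surviving vertices, \emph{except} possibly for a bounded number of boundary vertices whose domination in $G'$ relied on edges that were added and are therefore absent in $G$. Charging one further vertex to repair this boundary debt, one aims to make $D'\cup\{s\}\cup\{\text{one boundary vertex}\}$ dominate $G$, giving $\gamma(G)\le\gamma(G')+2$. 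If, in addition, $G'$ is connected, is not an exceptional graph, and satisfies $\rho(G')=k-1$, then the inductive hypothesis yields $\gamma(G)\le\gamma(G')+2\le 2(k-1)+2=2k=2\rho(G)$, as desired.

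The hard part lies entirely in controlling the reduction, and this is where I expect the genuine difficulty---and the reason the statement is still only a conjecture---to be. Deleting vertices only \emph{increases} distances, so $\rho$ can grow: although $S\setminus\{s\}$ witnesses $\rho(G')\ge k-1$, two survivors whose $G$-distance was exactly $2$ through $N_G[s]$ may reach distance $\ge 3$ in $G-N_G[s]$ and so spawn a packing of size $\ge k$, making the inductive hypothesis too weak. The boundary modification must therefore pin down $\rho(G')=k-1$ \emph{exactly} while respecting subcubicity, and for designing it I would lean on the subcubic structure exploited in the proof of Theorem~\ref{thm:cubic-i-rho2} (each vertex of $N_G(s)$ has at most two neighbors outside $N_G[s]$, and $G[N_G(S)]$ has maximum degree at most $2$), together with an orientation/charging scheme in the spirit of Lemma~\ref{lem:no_source}. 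A second, equally essential, obstruction is that the reduction may split off a component isomorphic to $H_1$, $H_2$, or $H_3$, for which $\gamma=3\rho$ and the inductive bound fails outright; ruling out or absorbing such components is unavoidable and is exactly what ties the list of exceptions to the reduction. I anticipate that the truly irreducible connected subcubic graphs---those admitting no ball whose deletion can be made distance-preserving without producing an exceptional component---are precisely $H_1,H_2,H_3$ (with $K_{3,3}$ and the remaining tight graphs safely satisfying $\gamma=2\rho$), so that isolating this finite obstruction set is the crux separating the conjectured $\gamma(G)\le 2\rho(G)$ from the $\gamma(G)\le 3\rho(G)$ that the $N_G(S)$ argument already yields for free.
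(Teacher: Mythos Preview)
The paper does not prove this statement: Conjecture~\ref{conj:2rho} is quoted in Section~\ref{sec:future} as an \emph{open} problem of Henning, L\"{o}wenstein, and Rautenbach, and the only progress the paper records is the claw-free case from their original article. There is therefore no proof in the paper to compare against.

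Your proposal is, by your own admission, not a proof either. The base case $\rho(G)=1$ is genuinely settled by the Henning--L\"{o}wenstein--Rautenbach characterization (all of $H_1,H_2,H_3$ have diameter $2$, hence $\rho=1$, so the characterization really does isolate the exceptions at this level). But the inductive step is a plan, not an argument. You correctly flag every obstruction: deleting $N_G[s]$ can only increase distances and hence $\rho$; any ``distance-preserving'' boundary modification must simultaneously keep the graph subcubic, force $\rho(G')=k-1$ exactly, and avoid spawning an $H_i$ component; and the lifted dominating set must cost at most two extra vertices. You do not construct such a modification, and there is no reason offered that one always exists---even the claim that a \emph{single} boundary vertex repairs the domination debt is unsupported, since a minimum dominating set of $G'$ may rely on several of the added edges at once. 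Nothing in the paper (neither Lemma~\ref{lem:no_source} nor the $X(A)$ machinery of Section~\ref{sec:proof}) speaks to these issues; they are precisely why the statement remains a conjecture.
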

It was shown in \cite{henning2011dominating} that the conjecture is true for {\em claw-free graphs}, the graphs with no induced subgraph isomorphic to $K_{1,3}$.
Since the independent domination number and the domination number are the same in claw-free graphs, so it immediately follows that $i(G) \le 2\rho(G)$ for subcubic claw-free graphs.
As a new step to confirm Conjecture~\ref{conj:2rho}, we can consider subcubic graphs excluding all graphs that belongs to the answer for Question~\ref{que:tightness}.
It is quite ambitious, but we can also try to figure out whether $i(G) \leq 2\rho(G)$ for such graphs, as a stronger analogue of Conjecture~\ref{conj:2rho}.

\section*{Acknowledgement}
Part of this research was conducted during the Winter Workshop in Combinatorics that was held in South Korea from January 30, 2023 to February 3, 2023, organized by Ilkyoo Choi, Minki Kim, and Boram Park.

\bibliographystyle{plain}
\bibliography{ref}

\end{document}